\newcommand{\sysn}{\left\{\begin{array}{rcl}}
\newcommand{\sysk}{\end{array}\right.}
\newtheorem{theorem}{Theorem}[section]
\theoremstyle{example}
\newtheorem{proposition}[theorem]{Proposition}
\theoremstyle{definition}
\newtheorem{remark}[theorem]{Remark}
\newtheorem{corollary}[theorem]{Corollary}
\journal{...}
\begin{document}

\title{The $\kappa$-Fr\'{e}chet-Urysohn property for $C_p(X)$ is equivalent to Baireness of $B_1(X)$}

\author{Alexander V. Osipov}

\address{Krasovskii Institute of Mathematics and Mechanics, \\ Ural Federal
 University, Yekaterinburg, Russia}

\ead{OAB@list.ru}

\begin{abstract} A topological space $X$ is {\it Baire} if the intersection of any
sequence of open dense subsets of $X$ is dense in $X$.

We establish that the property $(\kappa)$ for a Tychonoff space $X$ is equivalent to Baireness of $B_1(X)$ and, hence,  the Banakh property for $C_p(X)$ is equivalent to  meagerness of $B_1(X)$.
Thus, we obtain one characteristic of the Banakh property for $C_p(X)$  through the property of the space $X$.
%Finally, we proved that a pseudocompact space $X$ has the property $(\kappa)$ if and only if all countable subsets of $X$ are scattered.  This answers a question of Tkachuk.
\end{abstract}
%\tnotetext[label1]{The research has been supported by .}

\begin{keyword}  Baire space \sep $k$-Fr\'{e}chet-Urysohn space \sep Banakh property \sep Ascoli space  \sep  the property $(\kappa)$ \sep scattered space  \sep $C_p$-theory

\MSC[2010] 54C35 \sep 54E52 \sep 54C30 \sep 54G12

\end{keyword}

\maketitle %%
%% Start line numbering here if you want
%%
% \linenumbers

%% main text

\section{Introduction}

A space $X$ is {\it Fr\'{e}chet-Urysohn} if, for any $A\subseteq X$ and any $x\in A$, there exists
a sequence $\{a_n: n\in \mathbb{N}\}\subseteq A$ that converges to $x$.

A space $X$ is called {\it $k$-Fr\'{e}chet-Urysohn} if, for any open set $U\subset X$ and any point $x\in \overline{U}$, there exists a sequence $\{x_n: n\in \mathbb{N}\}\subset U$ that converges to~$x$. Clearly, every Fr\'{e}chet-Urysohn space is $k$-Fr\'{e}chet-Urysohn.

The class of $k$-Fr\'{e}chet-Urysohn spaces was introduced by Arhangel'skii but, long before that, Mr\'{o}wka proved in \cite{Mrowka} (without using the term) that any product of first countable spaces is $k$-Fr\'{e}chet-Urysohn.

 In \cite{Sakai}, Sakai characterized $\kappa$-Fr\'{e}chet-Urysohn property in $C_p(X)$ showing that $C_p(X)$ is $\kappa$-Fr\'{e}chet-Urysohn if and only if $X$ has the property $(\kappa)$.

A space $X$ is said to have {\it property $(\kappa)$} if every pairwise
disjoint sequence of finite subsets of $X$ has a strongly point-finite subsequence.

A family $\{A_{\alpha}: \alpha\in \kappa\}$ of subsets of a set $X$ is said to be {\it point-finite} if for every $x\in X$, $\{\alpha\in \kappa: x\in A_{\alpha}\}$ is finite.

A family $\{A_{\alpha}: \alpha\in \kappa\}$ of subsets of a space $X$ is said to be {\it strongly point-finite} if for every $\alpha\in \kappa$, there exists an open
set $U_{\alpha}$ of $X$ such that $A_{\alpha}\subset U_{\alpha}$ and $\{U_{\alpha}: \alpha\in \kappa\}$ is point-finite.

\medskip

A topological space $X$ is {\it Baire} if the intersection of any
sequence of open dense subsets of $X$ is dense in $X$.

 We establish that the $\kappa$-Fr\'{e}chet-Urysohn property for $C_p(X)$ is equivalent to Baireness of $B_1(X)$.

\section{Main definitions and notation}

In this paper all spaces are assumed to be Tychonoff. The expression $C(X,Y)$ denotes the set of all continuous maps from a space $X$
to a space $Y$. We follow the usual practice to write $C(X)$ instead of $C(X,\mathbb{R})$. The
space $C_p(X)$ is the set $C(X)$ endowed with the pointwise convergence topology.

A real-valued function $f$ on a space $X$ is a {\it Baire-one
function} (or a {\it function of the first Baire class}) if $f$ is
a pointwise limit of a sequence of continuous functions on $X$.
Let $B_1(X)$ denote the space of all Baire-one real-valued
functions on a space  $X$ with the topology of pointwise convergence.

 We recall that a subset of $X$ that is the
 complete preimage of zero for a certain function from~$C(X)$ is called a zero-set.
A subset $O\subseteq X$  is called  a cozero-set (or functionally
open) of $X$ if $X\setminus O$ is a zero-set of $X$. It is easy to
check that zero sets are preserved by finite unions and countable
intersections. Hence cozero sets are preserved by finite
intersections and countable unions. Countable unions of zero sets
will be denoted by $Zer_{\sigma}$ (or $Zer_{\sigma}(X)$),
countable intersection of cozero sets by $Coz_{\delta}$ (or
$Coz_{\delta}(X)$). It is easy to check that $Zer_{\sigma}$-sets
are preserved by countable unions and finite intersections.

Note that any zero-set of $X$ is $Coz_{\delta}$. It is enough to note that if $A=f^{-1}(0)$ for some $f\in C(X)$ then $A=\bigcap\limits_{n\in \mathbb{N}} f^{-1}((-\frac{1}{n+1},\frac{1}{n+1}))$.

It is well known that $f$ is
of the first Baire class if and only if $f^{-1}(U)\in
Zer_{\sigma}$ for every open $U\subseteq \mathbb{R}$ (see Exercise
3.A.1 in \protect\cite{lmz1}).

A $Coz_{\delta}$-subset of $X$ containing $x$ is called a {\it
$Coz_{\delta}$ neighborhood} of $x$.

\medskip

 A set $A\subseteq X$ is called {\it strongly
$Coz_{\delta}$-disjoint}, if there is a pairwise disjoint
collection $\{F_a: F_a$ is a $Coz_{\delta}$ neighborhood of $a$,
$a\in A\}$  such that $\{F_a: a\in A\}$ is a {\it completely
$Coz_{\delta}$-additive system}, i.e. $\bigcup\limits_{b\in B}
F_b\in Coz_{\delta}$ for each $B\subseteq A$.

A disjoint sequence $\{\Delta_n: n\in \mathbb{N}\}$ of (finite)
sets is  {\it strongly $Coz_{\delta}$-disjoint} if the set
$\bigcup\{\Delta_n: n\in \mathbb{N}\}$ is strongly
$Coz_{\delta}$-disjoint (see Definition 1 in \cite{Osip1}).

 In (\cite{Osip1}, see Theorem 8),  it is proved that $B_1(X)$ is Baire if and only if every pairwise disjoint sequence of non-empty finite subsets of
$X$ has a strongly $Coz_{\delta}$-disjoint subsequence.

A non-empty space is called {\it crowded} if it
has no isolated points.

A topological space $X$ is defined to be
{\it $k$-scattered} if every compact Hausdorff subspace of $X$ is scattered.

%A subset $B$ of a space $X$ is {\it $C$-compact} in $X$ if $f(B)$ is compact,
%for every real-valued continuous function $f$ on $X$.

%Call a map $f: (Y,\tau_Y)\rightarrow (Z, \tau_Z)$ is {\it almost open} if, for any $U\in \tau_Y\setminus\{\emptyset\}$, there is $V\in \tau_Z\setminus \{\emptyset\}$
%such that $f(U)\subseteq V\subseteq \overline{f(U)}$.

Denote by $C_k(X)$ the space $C(X)$ of all real-valued continuous functions on $X$ endowed with the compact-open
topology. Following \cite{BG1}, $X$ is called an {\it Ascoli space} if
every compact subset $K$ of $C_k(X)$ is evenly continuous (i.e., if the map $(f,x)\longmapsto f(x)$ is continuous
as a map from $K\times X$ to $\mathbb{R}$).

%For other notation almost without exceptions we follow the
%Engelking's book \cite{Eng}.

\section{Main results}

\begin{theorem}\label{th1} For any space $X$, the following conditions are equivalent:

\begin{enumerate}

\item  The space $C_p(X)$ is $\kappa$-Fr\'{e}chet-Urysohn.

\item The space $B_1(X)$ is Baire.

\end{enumerate}

\end{theorem}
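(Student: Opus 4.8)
The plan is to reduce Theorem~\ref{th1} to a single combinatorial equivalence about disjoint sequences of finite subsets of $X$ and then to prove that equivalence. By Sakai's theorem from \cite{Sakai}, condition (1) is equivalent to $X$ having the property $(\kappa)$, i.e. to: every pairwise disjoint sequence of non-empty finite subsets of $X$ has a strongly point-finite subsequence. By Theorem~8 of \cite{Osip1}, condition (2) is equivalent to: every pairwise disjoint sequence of non-empty finite subsets of $X$ has a strongly $Coz_\delta$-disjoint subsequence. Hence it suffices to show that, for a pairwise disjoint sequence $\{\Delta_n : n\in\mathbb{N}\}$ of non-empty finite subsets of a Tychonoff space $X$, the existence of a strongly point-finite subsequence is equivalent to the existence of a strongly $Coz_\delta$-disjoint subsequence. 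Two normalizations will be used. First, since $X$ is Tychonoff and the $\Delta_n$ are finite, in the definition of ``strongly point-finite'' the witnessing open sets may be taken to be cozero-sets (shrink each $U_\alpha$ to a cozero neighbourhood of the finite set $\Delta_\alpha$ contained in $U_\alpha$). Second, any pairwise disjoint family $\{F_a : a\in A\}$ of cozero-sets with $A$ countable is automatically a completely $Coz_\delta$-additive system, because a countable union of cozero-sets is a cozero-set (and every cozero-set is $Coz_\delta$); consequently a strongly $Coz_\delta$-disjoint subsequence is nothing but a subsequence $\{\Delta_{n_k}\}$ for which the countable set $\bigcup_k\Delta_{n_k}$ carries a pairwise disjoint family of cozero neighbourhoods of its points.

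For the implication ``strongly $Coz_\delta$-disjoint $\Rightarrow$ strongly point-finite'', I would start from a strongly $Coz_\delta$-disjoint subsequence $\{\Delta_{n_k}\}$ with pairwise disjoint $Coz_\delta$ neighbourhoods $F_a$, set $G_k=\bigcup_{a\in\Delta_{n_k}}F_a$, and use complete $Coz_\delta$-additivity to write the decreasing tails $R_N=\bigcup_{k\ge N}G_k$ as $\bigcap_m W_{N,m}$ with $W_{N,m}$ cozero and (after the obvious correction) decreasing in both indices; then $\bigcap_N R_N=\emptyset$ forces $\bigcap_k W_{k,k}=\emptyset$. Choosing for each $k$ a cozero enlargement $W_{k+1,m_k}$ of $R_{k+1}$ still disjoint from the finite set $\Delta_{n_k}$, one builds (possibly after passing to a further subsequence and a further round of shrinking) cozero sets $U_k\supseteq\Delta_{n_k}$ that are pushed off the later clumps by the factor associated with $W_{k+1,m_k}$ and off the earlier clumps by the factor $W_{k,k}$, so that $\{U_k\}$ is point-finite. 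This gives a strongly point-finite subsequence.

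The substantial direction is ``strongly point-finite subsequence $\Rightarrow$ strongly $Coz_\delta$-disjoint subsequence''. Relabel so that $\{\Delta_n\}$ itself has a point-finite cozero expansion $\{V_n\}$, $V_n=\{f_n>0\}$ with $f_n\in C(X,[0,1])$. Adjoin a countable family of continuous functions on $X$ separating the points of the countable set $\bigcup_n\Delta_n$, and let $\Psi=(f_n,\dots):X\to M$ be the resulting continuous map into a separable metrizable space $M$. Then $\Psi$ is one-to-one on $\bigcup_n\Delta_n$, the sets $\Psi(\Delta_n)$ are pairwise disjoint finite subsets of $\Psi(X)$, and the traces on $\Psi(X)$ of the coordinate cozero-sets of $M$ form a point-finite open expansion of $\{\Psi(\Delta_n)\}$ in $\Psi(X)$; thus the problem is transported to the metrizable space $\Psi(X)$. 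There I would extract, by a recursive fusion argument against the overlaps that point-finiteness still permits, a subsequence $\{\Psi(\Delta_{n_k})\}$ together with a pairwise disjoint family of open (hence cozero) neighbourhoods of the points of $\bigcup_k\Psi(\Delta_{n_k})$. Pulling these sets back by $\Psi$ yields pairwise disjoint cozero neighbourhoods of the points of $\bigcup_k\Delta_{n_k}$ in $X$, and, the index set being countable, the completely $Coz_\delta$-additive condition comes for free by the second normalization; so $\{\Delta_{n_k}\}$ is strongly $Coz_\delta$-disjoint.

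I expect the fusion step in the metrizable space to be the main obstacle: from a point-finite open expansion of a disjoint sequence of finite sets one must produce a subsequence whose union is cozero-disjointly expandable, and since mere point-finiteness allows the sets of the expansion to overlap heavily, neighbourhoods may have to be shrunk infinitely often during the recursion (staying $Coz_\delta$, hence controlling the additivity) while still being prevented from collapsing. This is exactly where Baire-category phenomena enter: if, say, $\bigcup_n\Delta_n$ is $1/n$-dense in an interval then no subsequence admits a point-finite open expansion in the ambient space, and — consistently with the claimed equivalence — no subsequence is strongly $Coz_\delta$-disjoint either. Once this extraction is in place the remaining verifications are routine, Theorem~\ref{th1} follows, and the stated corollary relating the Banakh property of $C_p(X)$ to meagerness of $B_1(X)$ is then a formal consequence.
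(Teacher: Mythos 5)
Your reduction of the theorem to the combinatorial equivalence ``strongly point-finite subsequence exists $\iff$ strongly $Coz_{\delta}$-disjoint subsequence exists,'' via Sakai's characterization of the $\kappa$-Fr\'{e}chet-Urysohn property of $C_p(X)$ and Theorem 8 of \cite{Osip1}, is exactly the paper's strategy, and your sketch of the direction ``strongly $Coz_{\delta}$-disjoint $\Rightarrow$ strongly point-finite,'' though more convoluted than necessary (the paper simply sets $H_i=W_i\setminus(S_1\cup\dots\cup S_{i-1})$ after writing $\bigcup_k S_k=\bigcap_i W_i$), is workable. The substantial direction, however, is not proved, and the plan for it rests on a faulty step. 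Your ``second normalization'' asserts that a strongly $Coz_{\delta}$-disjoint subsequence ``is nothing but'' one whose union carries pairwise disjoint \emph{cozero} neighbourhoods. Only one implication of this is true (disjoint cozero neighbourhoods do give a completely $Coz_{\delta}$-additive system, since countable unions of cozero sets are cozero). The converse fails: the $Coz_{\delta}$ neighbourhoods in the definition need not be open. For instance, in the one-point compactification of an uncountable discrete set $D$, the countable set $\{\infty\}\cup\{d_n:n\in\mathbb{N}\}$ is strongly $Coz_{\delta}$-disjoint (take $F_{d_n}=\{d_n\}$ and $F_\infty=\bigcap_n(X\setminus\{d_n\})$), yet admits no pairwise disjoint open neighbourhoods, since every open set containing $\infty$ is co-finite in $D$. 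Consequently the target you set yourself for the hard direction --- extract a subsequence with pairwise disjoint \emph{open} neighbourhoods in a metrizable image --- is strictly stronger than what is needed, and you give no argument that a strongly point-finite sequence always admits a subsequence meeting it. The ``fusion lemma'' that would deliver this is explicitly left as ``the main obstacle''; that lemma is the entire content of the implication, so the proof is incomplete.

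The paper's proof of $(1)\Rightarrow(2)$ avoids both difficulties, and it is worth seeing how. It performs \emph{no} further subsequence extraction and never tries to make the neighbourhoods open or disjoint-open. Instead, for the countable set $S=\bigcup_k F_{i_k}$ of a strongly point-finite subsequence with open witnesses $U_k$, it surrounds each $x\in F_{i_k}$ by a \emph{zero-set} $S_x\subseteq U_k$, built from countably many separating functions $f_{xy}$ and the functions $f_k$ vanishing on $F_{i_k}$ and equal to $1$ off $U_k$; disjointness of the $S_x$ is then automatic. The real work is the complete $Coz_{\delta}$-additivity, which is \emph{not} free for disjoint zero-sets (a countable union of zero-sets is only $Zer_{\sigma}$). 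This is where point-finiteness of $\{U_k\}$ enters: writing $A_k=\bigcup\{S_x:x\in F_{i_k}\}=\bigcap_j W_{k,j}$ with decreasing cozero $W_{k,j}\subseteq U_k$, one shows $\bigcup_k A_k=\bigcap_j\bigcup_k W_{k,j}$, because a point lying in only finitely many $U_k$ eventually escapes every $W_{k,j}$ for $j$ large. Hence $\bigcup_k A_k$ is $Coz_{\delta}$, and an arbitrary subunion $\bigcup_{x\in C}S_x$ is this $Coz_{\delta}$-set minus the $Zer_{\sigma}$-set $\bigcup_{x\in S\setminus C}S_x$, hence $Coz_{\delta}$. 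I recommend you replace the metrizable-reduction-plus-fusion plan by this zero-set construction; as written, your argument does not establish the theorem.
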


\begin{proof} $(1)\Rightarrow(2)$.   Let $\{F_i: i\in \mathbb{N}\}$ be a pairwise disjoint sequence of non-empty finite subsets of $X$. Then $\{F_i: i\in \mathbb{N}\}$
has a strongly point-finite subsequence $\{F_{i_k}: k\in \mathbb{N}\}$, i.e.  for every $k\in \mathbb{N}$, there exists an open
set $U_{k}$ of $X$ such that $F_{i_k}\subset U_{k}$ and $\{U_{k}: k\in \mathbb{N}\}$ is point-finite. Denote by $S=\bigcup\limits_{k\in \mathbb{N}} F_{i_k}$.

Since $X$ is Tychonoff, for any pair $x,y\in S$  ($x\neq y$) there is a continuous function $f_{xy}:X\rightarrow [0,1]$ such that $f_{xy}(x)=0$ and $f_{xy}(y)=1$ and for every $k\in \mathbb{N}$ there is a continuous function $f_{k}:X\rightarrow [0,1]$ such that  $f_{k}(F_{i_k})\subseteq\{0\}$ and $f_{k}(X\setminus U_{k})\subseteq\{1\}$.

For every $k\in \mathbb{N}$ and  $x\in F_{i_k}$ denote by

$S_x=(\bigcap\{f^{-1}_{yx}(1): y\in  S\setminus \{x\}\})\cap (\bigcap\{f^{-1}_{xy}(0): y\in  S\setminus \{x\}\})\cap f^{-1}_k(0)$  and $A_k=\bigcup\{S_x: x\in F_{i_k}\}$. Note that $S_x$ is a zero-set of $X$ for every $x\in S$ because $S$ is countable.

Since $f^{-1}_{xy}(1)\cap f^{-1}_{xy}(0)=\emptyset$,
$S_x\cap S_y=\emptyset$ for any $x,y\in S$ ($x\neq y$).

Since $F_{i_k}$ is finite,  $A_k$ is a zero-set of $X$ and $F_{i_k}\subseteq A_k\subseteq U_k$ for every $k\in\mathbb{N}$.

For every $k\in\mathbb{N}$ we consider the family $\{W_{k,j}: j\in \mathbb{N}\}$ of co-zero sets of $X$ such that $A_k=\bigcap\{W_{k,j}: j\in \mathbb{N}\}$,
$W_{k,j+1}\subset W_{k,j}$ and $W_{k,1}\subset U_k$ for any $j,k\in \mathbb{N}$.

Consider the set $A=\bigcap\limits_{j\in \mathbb{N}} \bigcup\limits_{k\in \mathbb{N}} W_{k,j}$. Note that $\bigcup\limits_{k\in \mathbb{N}} A_k\subseteq A$.

\bigskip

We claim that $A\subseteq \bigcup\limits_{k\in \mathbb{N}} A_k$. Let $x\not\in \bigcup\limits_{k\in \mathbb{N}} A_k$. Then the set $\{k: x\in U_k\}$ is finite. Let $\{k_1,...,k_m\}=\{k: x\in U_k\}$. For every $s\in \{1,...,m\}$ there is $j_s\in \mathbb{N}$ such that $x\not\in W_{k_s,j_s}$. Denote by $l=\max\{j_s: s\in \{1,...,m\}\}$.
Then $x\not\in W_{k,l}$ for every $k\in \mathbb{N}$ and, hence, $x\not\in A$ and  $A\subseteq \bigcup\limits_{k\in \mathbb{N}} A_k$.

Thus $A=\bigcup\limits_{k\in \mathbb{N}} A_k$ is a $Coz_{\delta}$-set of $X$.

Let $C\subset S$. Since $\bigcup\limits_{x\in S\setminus C} S_x$ is a $Zer_{\sigma}$-set, $\bigcup\limits_{x\in C} S_x=A\setminus (\bigcup\limits_{x\in S\setminus C} S_x)$ is a $Coz_{\delta}$-set of $X$.

It follows that the family $\{S_x: x\in S\}$ of zero-sets of $X$ is disjoint and completely $\mathrm{Coz}_{\delta}$-additive.

$(2)\Rightarrow(1)$.  Let $\{F_i: i\in \mathbb{N}\}$ be a pairwise disjoint sequence of non-empty finite subsets of $X$. Then $\{F_i: i\in \mathbb{N}\}$
has a strongly $Coz_{\delta}$-disjoint subsequence $\{F_{i_k}: k\in \mathbb{N}\}$, i.e.  for every $k\in \mathbb{N}$, there exists a zero-set $S_{k}$ of $X$ such that $F_{i_k}\subset S_{k}$ and $\{S_{k}: k\in \mathbb{N}\}$ is completely $\mathrm{Coz}_{\delta}$-additive (see Theorem 1.4 in \cite{Os2}).
Let $S=\bigcup \{S_{k}: k\in \mathbb{N}\}$. Since $\{S_{k}: k\in \mathbb{N}\}$ is completely $\mathrm{Coz}_{\delta}$-additive, $S$ is a $\mathrm{Coz}_{\delta}$-set in $X$, i.e. there is a family $\{W_i: i\in \mathbb{N}\}$ of co-zero sets of $X$ such that $S=\bigcap \{W_i: i\in \mathbb{N}\}$.
For every $i\in \mathbb{N}\setminus \{1\}$, let  $H_i=W_i\setminus(\bigcup\{S_k: k\in \overline{1,i-1}\})$ and $H_1=W_1$. Then $F_{i_k}\subseteq H_i$ and $\{H_i: i\in \mathbb{N}\}$ is point-finite.

\end{proof}

A space $X$ has {\it
the Banakh property} if there is a countable family $\{A_n : n\in \mathbb{N}\}$ of closed nowhere
dense subsets of $X$ such that for any compact subset $K$ of $X$, there is $n\in \mathbb{N}$
with $K\subseteq A_n$ (see \cite{Banakh}, Definition 5.1).

In \cite{Krupski}, it is proved that $C_p(X)$ does not have the Banakh property if and only if $C_p(X)$ is $\kappa$-Fr\'{e}chet-Urysohn. In \cite{GGKL},  it is proved that the Ascoli property of $C_p(X)$ implies that $C_p(X)$ is $\kappa$-Fr\'{e}chet-Urysohn. By Theorem 2.5 in \cite{Gab}, the $\kappa$-Fr\'{e}chet-Urysohn property of $C_p(X)$ implies that $C_p(X)$ is Ascoli.

  V. Tkachuk proved (see \cite{Tkachuk}) that $C_p(X)$ is $\kappa$-Fr\'{e}chet-Urysohn if and only $C_p(X,[0,1])$ is $\kappa$-Fr\'{e}chet-Urysohn.

Combining all the previous results we get the following

\begin{corollary}\label{cor1} {\it For any space $X$, the following conditions are equivalent:

\begin{enumerate}

\item The space $X$ has the property $(\kappa)$.

\item The space $C_p(X)$ is $\kappa$-Fr\'{e}chet-Urysohn.

\item The space $C_p(X,[0,1])$ is $\kappa$-Fr\'{e}chet-Urysohn.

\item The space $C_p(X)$ is Ascoli.

\item  The space $C_p(X)$ does not have the Banakh property.

\item  Every pairwise disjoint sequence of non-empty finite subsets of
$X$ has a strongly $Coz_{\delta}$-disjoint subsequence.

\item  The space $B_1(X)$ is Baire.

\end{enumerate}}

\end{corollary}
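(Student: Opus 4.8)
The plan is to prove Corollary~\ref{cor1} by assembling the cited results into a cycle of implications, with Theorem~\ref{th1} supplying the only genuinely new link. First I would record the backbone: Sakai's theorem gives $(1)\Leftrightarrow(2)$; Tkachuk's result gives $(2)\Leftrightarrow(3)$; the result of \cite{Krupski} gives $(2)\Leftrightarrow(5)$; the combination of \cite{GGKL} (Ascoli of $C_p(X)$ $\Rightarrow$ $\kappa$-Fr\'echet-Urysohn of $C_p(X)$) with Theorem~2.5 of \cite{Gab} ($\kappa$-Fr\'echet-Urysohn of $C_p(X)$ $\Rightarrow$ Ascoli of $C_p(X)$) gives $(2)\Leftrightarrow(4)$. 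Finally, Theorem~\ref{th1} itself is precisely $(2)\Leftrightarrow(7)$, and Theorem~8 of \cite{Osip1}, quoted in the preliminaries, is $(7)\Leftrightarrow(6)$. So every item is equivalent to item~(2), and the corollary follows.

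Concretely, I would write: by \cite{Sakai}, $C_p(X)$ is $\kappa$-Fr\'echet-Urysohn iff $X$ has property $(\kappa)$, so $(1)\Leftrightarrow(2)$. By Tkachuk's theorem \cite{Tkachuk}, $(2)\Leftrightarrow(3)$. By \cite{Krupski}, $(2)\Leftrightarrow(5)$. For $(2)\Leftrightarrow(4)$ I would note that \cite{GGKL} shows $(4)\Rightarrow(2)$ while \cite[Theorem~2.5]{Gab} shows $(2)\Rightarrow(4)$. For $(6)\Leftrightarrow(7)$ I would cite \cite[Theorem~8]{Osip1}. The remaining equivalence $(2)\Leftrightarrow(7)$ is Theorem~\ref{th1} of this paper. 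Chaining these, all seven conditions are mutually equivalent.

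There is essentially no obstacle here: the corollary is a bookkeeping statement, and the real work has already been done in Theorem~\ref{th1}. The only thing to be careful about is that some of the cited theorems are stated as one-directional implications (e.g. \cite{GGKL} and \cite{Gab}), so I must make sure the two halves together actually close the loop rather than leaving a gap — i.e., that no condition is only shown to imply, but not be implied by, condition~(2). A quick check confirms each listed source supplies the needed direction, so the cycle is complete. I would therefore keep the proof to a short paragraph, perhaps rendered as:
\begin{proof}
The equivalence $(1)\Leftrightarrow(2)$ is Sakai's theorem \cite{Sakai}; $(2)\Leftrightarrow(3)$ is Tkachuk's theorem \cite{Tkachuk}; $(2)\Leftrightarrow(5)$ is proved in \cite{Krupski}; $(2)\Rightarrow(4)$ is \cite[Theorem~2.5]{Gab} and $(4)\Rightarrow(2)$ is proved in \cite{GGKL}; $(6)\Leftrightarrow(7)$ is \cite[Theorem~8]{Osip1}; and $(2)\Leftrightarrow(7)$ is Theorem~\ref{th1}. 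Hence all the conditions are equivalent.
\end{proof}
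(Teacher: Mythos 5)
Your proposal is correct and matches the paper exactly: the paper gives no separate proof of Corollary~\ref{cor1} beyond the sentence ``Combining all the previous results we get the following,'' and the results it combines are precisely the ones you cite (Sakai for $(1)\Leftrightarrow(2)$, Tkachuk for $(2)\Leftrightarrow(3)$, \cite{Krupski} for $(2)\Leftrightarrow(5)$, \cite{GGKL} and \cite[Theorem~2.5]{Gab} for $(2)\Leftrightarrow(4)$, \cite[Theorem~8]{Osip1} for $(6)\Leftrightarrow(7)$, and Theorem~\ref{th1} for $(2)\Leftrightarrow(7)$). Your care in checking that the one-directional citations close the cycle is appropriate but, as you note, raises no actual difficulty.
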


For the remark below, we refer the reader to (\cite{Banakh}, Theorem 5.9) or (\cite{Gab}, Corollary 2.7).

\begin{remark}\label{cor11}{\it If $X$ is a compact space or, more generally, if $X$ is  \v{C}ech-complete, then all conditions in Corollary \ref{cor1} are equivalent to the condition: the space $X$ is scattered.}

\end{remark}

A topological space $X$ is called {\it almost $K$-analytic} if every countable subset of $X$ is contained in a $K$-analytic $G_{\delta}$-subspace of $X$.

By Corollary 9.8 in \cite{BG} and Theorem 17 in \cite{Osip1}, we get the following.

\begin{corollary}{\it For an almost $K$-analytic space $X$, the following conditions are equivalent:

\begin{enumerate}

\item $C_p(X)$ is $\kappa$-Fr\'{e}chet-Urysohn.

\item  $C_p(X)$ is an Ascoli space.

\item $B_1(X)$ is Choquet.

\item  Every countable subset of $X$ is strongly $Coz_{\delta}$-disjoint.

\item $X$ is $k$-scattered.

\end{enumerate}}

\end{corollary}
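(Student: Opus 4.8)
The plan is to obtain this corollary with no genuinely new argument, by concatenating Corollary~\ref{cor1} (which holds for every Tychonoff space) with Corollary~9.8 of \cite{BG} and Theorem~17 of \cite{Osip1}; the only work is to check that these line up into a single chain of equivalences and that ``almost $K$-analytic'' is indeed the hypothesis under which the combinatorial property $(\kappa)$ upgrades to $k$-scatteredness.

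First I would record the implications available for arbitrary $X$. By Corollary~\ref{cor1}, conditions (1) and (2) are equivalent, and both are equivalent to ``$X$ has the property $(\kappa)$'', to ``$B_1(X)$ is Baire'', and to ``every pairwise disjoint sequence of non-empty finite subsets of $X$ has a strongly $Coz_{\delta}$-disjoint subsequence''. Next, (4)$\Rightarrow$(2): if every countable subset of $X$ is strongly $Coz_{\delta}$-disjoint, then for any pairwise disjoint sequence of non-empty finite subsets of $X$ the union of the sequence is a countable set, hence strongly $Coz_{\delta}$-disjoint, so the sequence is already a strongly $Coz_{\delta}$-disjoint subsequence of itself and Corollary~\ref{cor1} applies. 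Likewise (3)$\Rightarrow$(2): every Choquet space is Baire, so if $B_1(X)$ is Choquet it is in particular Baire, and again Corollary~\ref{cor1} applies. These two easy implications will later serve as consistency checks on the arrangement of the cited results.

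It then remains to close the cycle, and this is where the hypothesis enters. Corollary~9.8 of \cite{BG} and Theorem~17 of \cite{Osip1} together provide, for an almost $K$-analytic space $X$, the equivalences (3)$\Leftrightarrow$(4)$\Leftrightarrow$(5) and the identification of these with (1)/(2); in particular the delicate implication ``$C_p(X)$ is $\kappa$-Fr\'{e}chet-Urysohn $\Rightarrow$ $X$ is $k$-scattered'', which is false without a completeness-type assumption (compare Remark~\ref{cor11}), is supplied by these results. Threading everything together --- (1)$\Leftrightarrow$(2) from Corollary~\ref{cor1}, the link between (2) and (5), and (5)$\Leftrightarrow$(4)$\Leftrightarrow$(3) --- yields all five equivalences. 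I expect the only real care needed to be in matching the exact formulations and hypotheses of the two cited theorems to the present list (notably, confirming that almost $K$-analyticity is precisely what makes (4) and (5) coincide with property $(\kappa)$); there is no substantive obstacle beyond that, since the mathematical content lives in the cited results rather than in their assembly.
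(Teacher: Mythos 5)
Your proposal matches the paper's approach: the paper gives no argument beyond citing Corollary~9.8 of \cite{BG} and Theorem~17 of \cite{Osip1} (implicitly together with Corollary~\ref{cor1}), which is exactly the assembly you describe. Your explicit verification of the easy implications (4)$\Rightarrow$(2) and (3)$\Rightarrow$(2) is correct and, if anything, slightly more detailed than what the paper records.
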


%\section{Examples}

By Theorem 3.19 in \cite{Tkachuk}, if $X$ is a pseudocompact first countable space with the property $(\kappa)$, then
$X$ is scattered. It was proved in \cite{Sakai} that every scattered space has the property $(\kappa)$.

Thus, we have the following

\begin{proposition}{\it For any pseudocompact first countable space $X$, the following conditions are equivalent:

\begin{enumerate}

\item  The space $B_1(X)$ is Baire.

\item  The space $C_p(X)$ does not have the Banakh property.

\item  The space $X$ is scattered.

\end{enumerate}}

\end{proposition}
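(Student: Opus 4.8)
The plan is to assemble this Proposition from the pieces already in place, treating it as a bookkeeping exercise rather than a new argument. The equivalence of (1), (2), and the property $(\kappa)$ is immediate from Corollary~\ref{cor1}: conditions (1) and (2) here are conditions (7) and (5) of that corollary, and both are equivalent to $X$ having the property $(\kappa)$ (condition (1) of the corollary). So the only real content is to show that, for a pseudocompact first countable space $X$, the property $(\kappa)$ is equivalent to $X$ being scattered.

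For that equivalence I would cite the two facts quoted just before the statement. The forward direction, property $(\kappa)\Rightarrow$ scattered, is exactly Theorem~3.19 of \cite{Tkachuk} under the pseudocompact first-countable hypothesis. The reverse direction, scattered $\Rightarrow$ property $(\kappa)$, holds for every space with no hypothesis at all, by the result of Sakai in \cite{Sakai}. Combining these with the corollary closes the triangle: $(1)\Leftrightarrow(\kappa)\Leftrightarrow(2)$ always, and $(\kappa)\Leftrightarrow(3)$ under the standing hypothesis, so all three are equivalent.

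Concretely I would write: ``$(1)\Leftrightarrow(2)$ is the equivalence of conditions (7) and (5) in Corollary~\ref{cor1}, and each is equivalent to $X$ having the property $(\kappa)$, i.e.\ condition (1) of that corollary. It remains to show that a pseudocompact first countable space $X$ has the property $(\kappa)$ if and only if it is scattered. If $X$ has the property $(\kappa)$ then $X$ is scattered by Theorem~3.19 in \cite{Tkachuk}. Conversely, every scattered space has the property $(\kappa)$ by \cite{Sakai}. This completes the proof.''

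The only point requiring a moment's care — the ``main obstacle,'' such as it is — is making sure the hypothesis is used where it is needed and not where it isn't: the implication property $(\kappa)\Rightarrow$ scattered genuinely requires both pseudocompactness and first countability (it is false in general, e.g.\ every Lindelöf space has $(\kappa)$), whereas scattered $\Rightarrow(\kappa)$ is unconditional, and the equivalence $(1)\Leftrightarrow(2)\Leftrightarrow(\kappa)$ from Corollary~\ref{cor1} also needs no extra hypothesis. Beyond getting that attribution straight, there is nothing to grind through.
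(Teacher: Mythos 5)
Your proof is correct and is essentially identical to the paper's argument: the author derives the proposition precisely by combining Corollary~\ref{cor1} (which gives $(1)\Leftrightarrow(2)\Leftrightarrow$ property $(\kappa)$ with no extra hypotheses) with Theorem~3.19 of \cite{Tkachuk} for $(\kappa)\Rightarrow$ scattered under pseudocompactness and first countability, and Sakai's result \cite{Sakai} that every scattered space has property $(\kappa)$. One small caveat about your parenthetical aside: the claim that every Lindel\"of space has property $(\kappa)$ is false --- $\mathbb{R}$ is Lindel\"of, \v{C}ech-complete and non-scattered, so by Remark~\ref{cor11} it fails $(\kappa)$ --- but this does not affect the proof, and your underlying point stands, since $(\kappa)$ genuinely does not imply scatteredness in general (e.g.\ the dense-in-itself countably compact spaces of Juh\'asz and van Mill have property $(\kappa)$).
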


By Corollary 3.3 in \cite{Tkachuk}, if all countable subsets of a space $X$ are scattered, then $X$ has the property $(\kappa)$ and, hence, by Theorem \ref{th1}, $B_1(X)$ is a Baire space.

\medskip

In \cite{JM},  Juh\'{a}sz and van Mill gave several examples of countably compact dense-in-itself spaces in which all countable subsets are scattered. Thus we get the following.

\begin{proposition} There exists a dense-in-itself countably compact space $X$ such that $B_1(X)$ is Baire.

\end{proposition}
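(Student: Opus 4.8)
The plan is to produce the required space by quoting an existing construction and then running it through the equivalences established above. First I would let $X$ be one of the countably compact, dense-in-itself Tychonoff spaces built by Juh\'asz and van Mill in \cite{JM} with the property that every countable subspace of $X$ is scattered. Such a space is exactly the raw material we need: it lies in the class of spaces considered in this paper (Tychonoff), it is countably compact and has no isolated points, and each of its countable subsets is scattered.

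Next I would apply Corollary 3.3 of \cite{Tkachuk}, which asserts that if all countable subsets of a space are scattered then that space has the property $(\kappa)$; hence $X$ has the property $(\kappa)$. Then, by Theorem \ref{th1} together with Sakai's characterization of the $\kappa$-Fr\'echet-Urysohn property of $C_p(X)$ (equivalently, via the equivalence $(1)\Leftrightarrow(7)$ of Corollary \ref{cor1}), the property $(\kappa)$ for $X$ is equivalent to Baireness of $B_1(X)$. Therefore $B_1(X)$ is a Baire space, while $X$ is dense-in-itself and countably compact, as required.

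I expect the only genuine difficulty to lie in \cite{JM} rather than here: the delicate point is that a crowded countably compact space can nevertheless have all of its countable subspaces scattered, which forces one to control the closure of each countable set while never creating an isolated point in the whole space; this is precisely the content of the Juh\'asz--van Mill examples, which we are content to cite. It is worth noting that such an $X$ cannot be first countable, since a countably compact space is pseudocompact, so by the Proposition above a first countable example with $B_1(X)$ Baire would have to be scattered, contradicting that $X$ is dense-in-itself. Thus the example also illustrates that, outside the \v{C}ech-complete setting of Remark \ref{cor11}, Baireness of $B_1(X)$ is strictly weaker than scatteredness of $X$.
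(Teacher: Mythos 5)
Your proof is correct and follows exactly the paper's route: cite the Juh\'asz--van Mill examples of crowded countably compact spaces whose countable subsets are all scattered, invoke Tkachuk's Corollary 3.3 to get property $(\kappa)$, and then apply the equivalence $(1)\Leftrightarrow(7)$ of Corollary \ref{cor1} (i.e.\ Sakai's characterization plus Theorem \ref{th1}) to conclude that $B_1(X)$ is Baire. The added observation that such an $X$ cannot be first countable is a pleasant extra but not needed.
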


In \cite{RT},  Reznichenko and Tkachenko showed that all countable subsets of any pseudocompact quasitopological
group in the form of a Korovin orbit are closed and $C^*$-embedded.

In (\cite{OsPy1}, Theorem 3.12), it was proved that if all countable subsets of a space $X$ are scattered and $C^*$-embedded then $C_p(X, [0,1])$ is Baire space.

Thus we get the following.

\begin{proposition}\label{pr1} There exists a pseudocompact quasitopological
group $G$ in the form of a Korovin orbit  such that $B_1(G)$ and $C_p(G,[0,1])$ are Baire spaces.

\end{proposition}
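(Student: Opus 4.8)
The plan is to build the desired space $G$ by combining two known constructions cited in the excerpt. First I would invoke the result of Reznichenko and Tkachenko from \cite{RT}: there exists a pseudocompact quasitopological group $G$ realized as a Korovin orbit in which every countable subset is closed and $C^*$-embedded. The additional property I need, that every countable subset of $G$ is \emph{scattered}, is not automatically supplied by that reference, so the key point is to see that Korovin orbits can be arranged to have this property as well; here I would appeal to the analysis of Korovin orbits underlying \cite{JM}, where countably compact dense-in-itself examples with all countable subspaces scattered are produced by precisely this orbit technique. Thus one fixes a single $G$ enjoying simultaneously: (i) $G$ is a pseudocompact quasitopological group of Korovin-orbit type, (ii) every countable $S\subseteq G$ is scattered, and (iii) every countable $S\subseteq G$ is $C^*$-embedded in $G$.

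With such a $G$ in hand, the two conclusions follow by feeding (ii) and (iii) into the two cited theorems. For $C_p(G,[0,1])$: property (ii) together with (iii) says that all countable subsets of $G$ are scattered and $C^*$-embedded, so Theorem 3.12 of \cite{OsPy1} applies verbatim and yields that $C_p(G,[0,1])$ is a Baire space. For $B_1(G)$: property (ii) alone says all countable subsets of $G$ are scattered, so by Corollary 3.3 of \cite{Tkachuk} the space $G$ has the property $(\kappa)$; then Theorem~\ref{th1} (equivalently, Corollary~\ref{cor1}) gives that $B_1(G)$ is Baire. No further computation is needed once the single space $G$ with the three listed features has been exhibited.

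The main obstacle is the first step: pinning down that one \emph{and the same} space can be taken to be a pseudocompact quasitopological group in Korovin-orbit form while also having all countable subspaces scattered. The $C^*$-embedding of countable sets is exactly what \cite{RT} delivers for Korovin orbits, and scatteredness of countable subsets is exactly the feature of the \cite{JM}-type examples; the content of this proposition is the observation that the Korovin construction can be tuned to achieve both at once (the group structure and pseudocompactness being built into the orbit, and the scatteredness of countable subsets being a genuine freedom in choosing the orbit). Once this is granted, everything else is a direct citation. I would therefore present the proof as: choose $G$ as above; apply \cite[Theorem 3.12]{OsPy1} for $C_p(G,[0,1])$; apply \cite[Corollary 3.3]{Tkachuk} and Theorem~\ref{th1} for $B_1(G)$.
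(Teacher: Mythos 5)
Your overall architecture matches the paper's: produce one space $G$ that is a pseudocompact quasitopological Korovin-orbit group all of whose countable subsets are scattered and $C^*$-embedded, then cite Theorem 3.12 of \cite{OsPy1} for $C_p(G,[0,1])$ and Corollary 3.3 of \cite{Tkachuk} together with Theorem~\ref{th1} for $B_1(G)$. Those two final steps are exactly right and need no further comment.

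The gap is in how you propose to secure scatteredness of countable subsets. You assert that \cite{RT} does not supply it and that one must instead ``tune'' the Korovin construction by appealing to the examples of \cite{JM}. Both halves of that claim are problematic. First, the Juh\'{a}sz--van Mill examples of countably compact dense-in-itself spaces with all countable subsets scattered are \emph{not} produced by the Korovin orbit technique (Korovin's construction of quasitopological groups postdates \cite{JM} by a decade), so there is no ``analysis of Korovin orbits underlying \cite{JM}'' to invoke; the statement that the orbit construction ``can be tuned to achieve both at once'' is left entirely unproved, and as written it is the load-bearing step of your argument. Second, and more importantly, no tuning is needed: the result of Reznichenko and Tkachenko actually shows that all countable subsets of a pseudocompact quasitopological Korovin group are closed, \emph{discrete} and $C^*$-embedded. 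A discrete space is trivially scattered, so property (ii) in your list comes for free from the very same reference that gives you (i) and (iii), and \emph{any} such Korovin group works. Replacing your appeal to \cite{JM} by the observation ``discrete $\Rightarrow$ scattered'' closes the gap and recovers the paper's argument.
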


\medskip

A subset $A$ of a space $X$ is called a {\it bounded} subset (in
$X$) if every continuous real-valued function on $X$ is bounded on
$A$.

\medskip

\begin{remark}(Proposition 5.1 in \protect\cite{dkm}, Corollary 3.3 in \protect\cite{LM} for an infinite
pseudocompact subspace $A$)  If $X$ is a space containing an
infinite bounded subset $A$ (e.g. a non-trivial convergent
sequence) then $C_p(X)$ is meager. It follows that $C_p(G)$ (for the space $G$ from Proposition \ref{pr1}) is meager.
\end{remark}

\bigskip

In \cite{BG}, it is defined a space $X$ to be {\it airy} if it admits a countable family $\mathcal{P}$ of infinite subsets such that any $\mathcal{P}$-dense $G_{\delta}$-sets $G_1$, $G_2\subseteq X$ have nonempty intersection $G_1\cap G_2=\emptyset$. The $\mathcal{P}$-density of the sets $G_i$ means that $G_i\cap P=\emptyset$ for any $P\in \mathcal{P}$. Using  this notion and Theorem 1.4 in \cite{BG}, we obtain the following.

\begin{proposition}
If a space $X$ is airy, then its function space $C_p(X)$ has the Banakh property.
\end{proposition}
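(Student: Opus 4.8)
The plan is to reduce the claim to the combinatorial characterization already proved above. Negating items (5) and (6) of Corollary \ref{cor1}, $C_p(X)$ has the Banakh property if and only if there is a pairwise disjoint sequence of non-empty finite subsets of $X$ that admits no strongly $Coz_{\delta}$-disjoint subsequence; so it suffices to manufacture such a sequence from the countable family $\mathcal{P}=\{P_n:n\in\mathbb{N}\}$ of infinite sets witnessing that $X$ is airy. Recall that here a $G_{\delta}$-subset of $X$ is $\mathcal{P}$-dense precisely when it meets every $P_n$, and $X$ being airy means that any two $\mathcal{P}$-dense $G_{\delta}$-subsets of $X$ have non-empty intersection.

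First I would build the finite sets diagonally. At stage $k$, pick points $y_{k,1},\dots,y_{k,k}$ with $y_{k,n}\in P_n$ for $1\le n\le k$, all distinct from one another and from every point already chosen at earlier stages; this is possible because each $P_n$ is infinite and only finitely many points have been used so far. Set $F_k=\{y_{k,1},\dots,y_{k,k}\}$. Then $\{F_k:k\in\mathbb{N}\}$ is a pairwise disjoint sequence of non-empty finite subsets of $X$, and its key property is that for every infinite $I\subseteq\mathbb{N}$ and every $n\in\mathbb{N}$ the set $\bigcup_{k\in I}F_k$ meets $P_n$: choose $k\in I$ with $k\ge n$, and then $y_{k,n}\in P_n\cap F_k$. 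Hence the union of the members of $\{F_k\}$ along any infinite subsequence is $\mathcal{P}$-dense.

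Next I would rule out strongly $Coz_{\delta}$-disjoint subsequences. Assume, for contradiction, that $\{F_{k_i}:i\in\mathbb{N}\}$ is such a subsequence. Put $Y=\bigcup_{i}F_{k_i}$; by definition there is a pairwise disjoint collection $\{U_y:y\in Y\}$ in which each $U_y$ is a $Coz_{\delta}$ neighborhood of $y$ and $\bigcup_{y\in C}U_y\in Coz_{\delta}$ for every $C\subseteq Y$. Partition the index set $\mathbb{N}$ into two infinite pieces $I_1,I_2$, and set $Y_j=\bigcup_{i\in I_j}F_{k_i}$ and $G_j=\bigcup_{y\in Y_j}U_y$ for $j=1,2$. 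By complete $Coz_{\delta}$-additivity $G_1,G_2\in Coz_{\delta}$, and a $Coz_{\delta}$-set, being a countable intersection of open sets, is a $G_{\delta}$-set; since $Y_j\subseteq G_j$ and each $Y_j$ is $\mathcal{P}$-dense by the previous paragraph, each $G_j$ is a $\mathcal{P}$-dense $G_{\delta}$-set; and $G_1\cap G_2=\emptyset$, because the $U_y$ are pairwise disjoint while $Y_1\cap Y_2=\emptyset$. This contradicts airiness of $X$. Therefore $\{F_k\}$ has no strongly $Coz_{\delta}$-disjoint subsequence, and Corollary \ref{cor1} gives that $C_p(X)$ has the Banakh property.

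The step I expect to require the most care is the matching of the finite sets with $\mathcal{P}$ in the second paragraph: the obvious choice of one point from each $P_n$ does not work, since an infinite subsequence may concentrate on finitely many of the $P_n$---or even miss infinitely many of them---and then the sets $G_j$ need not be $\mathcal{P}$-dense. Demanding that $F_k$ contain a representative of each of $P_1,\dots,P_k$ is exactly what makes $\mathcal{P}$-density survive passage to an arbitrary infinite subsequence, which is what lets airiness be applied at the end. The remaining ingredients---unwinding the definition of a strongly $Coz_{\delta}$-disjoint sequence into a point-indexed, completely $Coz_{\delta}$-additive family of $Coz_{\delta}$-neighborhoods, and noting that $Coz_{\delta}$-sets are $G_{\delta}$-sets---are routine and are already implicit in the proof of Theorem \ref{th1}.
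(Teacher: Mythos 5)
Your proof is correct, but it takes a genuinely different route from the paper's. The paper does not argue the proposition at all: it simply observes that it follows from Theorem 1.4 of Banakh--Gabriyelyan (airy implies the relevant meagerness) combined with the equivalences already assembled in Corollary \ref{cor1}. You instead give a self-contained combinatorial proof inside the paper's own framework: you negate the equivalence $(5)\Leftrightarrow(6)$ of Corollary \ref{cor1}, build by a diagonal selection a pairwise disjoint sequence $\{F_k\}$ of finite sets in which $F_k$ carries a fresh representative of each of $P_1,\dots,P_k$ (so that the union along \emph{any} infinite subsequence is $\mathcal{P}$-dense --- the right fix for the naive one-point-per-$P_n$ choice, as you note), and then split any putative strongly $Coz_{\delta}$-disjoint subsequence into two disjoint $\mathcal{P}$-dense $Coz_{\delta}$-sets, contradicting airiness since $Coz_{\delta}$-sets are $G_{\delta}$. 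All steps check out, including your silent correction of the two typographical ``$=\emptyset$'' in the paper's statement of the definition of airy (both should read ``$\neq\emptyset$''; with the definition as literally printed the proposition would be false, e.g.\ for a convergent sequence). What your approach buys is independence from the external Theorem 1.4 of \cite{BG}; what it costs is that you are essentially re-deriving a special case of that theorem, and your argument is close in spirit to how such results are proved there.
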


A topological space $X$ is defined to be  $B_{\pi}$-scattered if for any Baire space $B$ with countable $\pi$-base and any continuous map $f:B\rightarrow X$ there exists a nonempty open set $U\subseteq B$ such that $f(U)$ is finite (Definition 8.1 in \cite{BG}).

By Theorem 8.2 in \cite{BG}, if $X$ is nonaire, then $X$ is $B_{\pi}$-scattered.

\begin{corollary}
If a space $C_p(X)$ is $\kappa$-Fr\'{e}chet-Urysohn, then $X$ is $B_{\pi}$-scattered (and, hence, $k$-scattered).
\end{corollary}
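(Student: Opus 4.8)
The plan is to obtain the statement as a purely formal consequence of results already recorded above, together with two facts from \cite{BG}. First I would unwind the hypothesis: if $C_p(X)$ is $\kappa$-Fr\'{e}chet-Urysohn, then by Corollary~\ref{cor1} (the equivalence of conditions (2) and (5)) the space $C_p(X)$ does not have the Banakh property.

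Next I would invoke the Proposition immediately preceding this corollary in its contrapositive form. That Proposition asserts that if $X$ is airy, then $C_p(X)$ has the Banakh property; hence, since $C_p(X)$ fails to have the Banakh property, $X$ cannot be airy.

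Then I would apply Theorem~8.2 of \cite{BG}, according to which every non-airy space is $B_{\pi}$-scattered. This gives the first assertion of the corollary. For the parenthetical ``and, hence, $k$-scattered'', I would cite the implication, also established in \cite{BG}, that every $B_{\pi}$-scattered space is $k$-scattered; combining this with the above yields the full statement.

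Since every step is either a quotation of a displayed result from this paper or a citation to \cite{BG}, there is essentially no computational content; the only point requiring care is bookkeeping of the logical directions --- in particular that the relevant implications are used contrapositively (``airy $\Rightarrow$ Banakh property'' becomes ``no Banakh property $\Rightarrow$ not airy''), and that the form of the Banakh property furnished by Corollary~\ref{cor1} is the same one appearing in the cited Proposition. The main (indeed, the only) genuinely non-trivial input is Theorem~8.2 of \cite{BG}, whose proof is not reproduced here; granting it, the argument reduces to a short chain of implications.
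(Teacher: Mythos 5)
Your proposal is correct and follows exactly the chain the paper intends: $\kappa$-Fr\'{e}chet-Urysohn $\Rightarrow$ no Banakh property (Corollary~\ref{cor1}), $\Rightarrow$ $X$ not airy (contrapositive of the preceding Proposition), $\Rightarrow$ $B_{\pi}$-scattered (Theorem~8.2 of \cite{BG}), with the $k$-scattered part imported from \cite{BG}. No issues.
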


In (\cite{BG}, Example 8.4), authors present a (consistent) example of a $B_{\pi}$-scattered metrizable separable space which is airy. Thus, by Theorem 1.4 in \cite{BG}, we obtain the following

\begin{proposition} $(\mathfrak{b}=\mathfrak{c})$ There exists a $B_{\pi}$-scattered  metrizable separable space $X$ such that $C_p(X)$  does not have the $\kappa$-Fr\'{e}chet-Urysohn property.
\end{proposition}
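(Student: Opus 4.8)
The plan is to obtain the statement by combining a single (consistent) example from \cite{BG} with the equivalences collected in Corollary \ref{cor1}; no new construction is needed on our part.

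First I would quote (\cite{BG}, Example 8.4), which, under the hypothesis $\mathfrak{b}=\mathfrak{c}$, provides a metrizable separable space $X$ that is simultaneously $B_{\pi}$-scattered and airy. This $X$ is to be the space in the statement, so metrizability, separability, and $B_{\pi}$-scatteredness come for free. Next I would invoke the proposition proved above (the statement that if $X$ is airy then $C_p(X)$ has the Banakh property, which rests on Theorem 1.4 in \cite{BG}): since $X$ is airy, $C_p(X)$ has the Banakh property. Explicitly, a countable family $\mathcal{P}$ of infinite subsets of $X$ witnessing airiness is transformed by Theorem 1.4 in \cite{BG} into a countable family of closed nowhere dense subsets of $C_p(X)$ absorbing every compact subset of $C_p(X)$. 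Finally I would apply Corollary \ref{cor1}, the equivalence of conditions $(2)$ and $(5)$ there: $C_p(X)$ is $\kappa$-Fr\'{e}chet-Urysohn if and only if $C_p(X)$ does not have the Banakh property. Since $C_p(X)$ \emph{does} have the Banakh property by the previous step, $C_p(X)$ fails to be $\kappa$-Fr\'{e}chet-Urysohn, which is the desired conclusion.

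Thus the argument itself is short and is essentially a chain of citations; the only things to check are routine, namely that the notion of ``airy'' used in Example 8.4 is exactly the hypothesis consumed by Theorem 1.4 in \cite{BG}, and that $\mathfrak{b}=\mathfrak{c}$ is the set-theoretic assumption under which Example 8.4 is constructed. The main obstacle does not lie in this paper at all: it is the construction of Example 8.4, where the cardinal equality $\mathfrak{b}=\mathfrak{c}$ is used to build, on a separable metrizable carrier, a suitable family of $G_{\delta}$-sets that are dense off a fixed countable family $\mathcal{P}$ and pairwise disjoint, all while keeping the space $B_{\pi}$-scattered; since we are entitled to cite that example, this difficulty does not resurface in our proof.
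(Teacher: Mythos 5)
Your proposal is correct and follows exactly the paper's route: cite Example 8.4 of \cite{BG} for a $B_{\pi}$-scattered metrizable separable airy space under $\mathfrak{b}=\mathfrak{c}$, apply the preceding proposition (airy implies $C_p(X)$ has the Banakh property, via Theorem 1.4 in \cite{BG}), and conclude via the equivalence of conditions (2) and (5) in Corollary \ref{cor1}. No gaps.
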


%\end{fulltext}

\bibliographystyle{model1a-num-names}
\bibliography{<your-bib-database>}

\end{document}